\documentclass[12 pt, english]{amsart}
\usepackage[english]{babel}
\usepackage[utf8]{inputenc}
\usepackage[T1]{fontenc}
\usepackage[english]{babel}
\usepackage{amsmath}
\usepackage{amssymb}
\usepackage{multirow}
\usepackage{mathtools}
\usepackage{amsthm}
\usepackage[new]{old-arrows}
\theoremstyle{plain}
\usepackage{xfrac}
\usepackage{eucal}
\usepackage{comment}
\usepackage{setspace}
\usepackage{amsfonts}
\usepackage{graphicx}
\usepackage[left=2cm,right=2cm,top=2cm,bottom=2cm]{geometry}
\usepackage{hyperref}
\usepackage{pgf}
\usepackage{tikz}
\usepackage{tkz-fct}
\usepackage[all,cmtip]{xy}
\usepackage{tikz-cd}
\usetikzlibrary{intersections,arrows,chains,matrix,positioning,scopes}

\title{A note on complex Lie Algebras isomorphic to their conjugate}
\author{Cyril Demarche}
\date{April 1, 2026}

\address{Sorbonne Université, Université Paris Cité, CNRS, IMJ-PRG, F-75005 Paris, France}

\newcommand{\R}{\mathbb{R}}
\newcommand{\C}{\mathbb{C}}

\DeclareMathOperator{\GL}{GL}
\DeclareMathOperator{\PGL}{PGL}

\DeclareMathOperator{\Gal}{Gal}

\DeclareMathOperator{\Gm}{\mathbf{G}_m}
\DeclareMathOperator{\Spec}{Spec}
\DeclareMathOperator{\Br}{Br}

\theoremstyle{plain}
\newtheorem{theorem}{Theorem}

\newtheorem{proposition}[theorem]{Proposition}
\newtheorem*{proposition*}{Proposition}
\newtheorem{lemma}[theorem]{Lemma}
\newtheorem{corollary}[theorem]{Corollary}
\theoremstyle{definition}

\theoremstyle{remark}

\begin{document}

\maketitle

\begin{abstract}A real Lie algebra defines by extension of scalars a complex Lie algebra that is isomorphic to its Galois conjugate. In this paper, we are interested in the converse property: given a complex Lie algebra that is isomorphic to its conjugate, is it defined over the real numbers? We prove the existence of a $10$-dimensional nilpotent complex Lie algebra for which the answer is negative, disproving a recent conjecture by Der\'e. In addition, we compute the generic obstruction to this descent problem in terms of Brauer groups.
\end{abstract}

\section{Introduction}

In this note, we are basically interested in the following question: given a complex Lie algebra that is isomorphic to its (Galois) conjugate, is it defined over the real numbers?

More generally, if $K$ is a field and $K^s$ a separable closure with Galois group $\Gamma := \Gal(K^s / K)$, given a Lie algebra $L$ over $K^s$ such that for all $\gamma \in \Gamma$, the $K^s$-Lie algebras $L^\gamma$ and $L$ are isomorphic, is the Lie algebra $L$ defined over $K$, i.e. does there exist a $K$-Lie algebra $L_0$ such that $L$ is isomorphic to $L_0 \otimes_K K^s$ over $K^s$?

This question is a very particular case of the much more general problem known as the "field of definition versus field of moduli" problem, which can be stated as follows: given a geometric or algebraic object $X$ defined over $K^s$, a necessary condition for $X$ to be defined over $K$ is that $X$ is isomorphic to all of its Galois conjugates under $\Gamma$. When this condition is satisfied, one says that $K$ is the field of moduli of $X$. The main question is now whether $X$ is defined over $K$ or not. In other words, does it exist $X_0$ defined over $K$ such that $X$ is isomorphic over $K^s$ to the base change of $X_0$? If not, can we explain it by natural (cohomological) obstructions?

Many works were completed about this general question, in several particular geometric cases, such as the case of algebraic curves (Shimura, \cite{S72} and D\`ebes-Emsalem, \cite{DE99}), that of Galois covers of algebraic varieties (D\`ebes-Douai, \cite{DD}) and more recently that of higher dimensional algebraic varieties and abelian varieties (Bresciani-Vistoli, \cite{BV24} and Bragg-Lieblich, \cite{murphy24}). The most recent approach to this problem is formulated in the general language of points in algebraic stacks, and natural obstructions for this problem are gerbes over the field of moduli.

In this paper, we focus on the more algebraic setting of Lie algebras over fields. We begin by recalling a few known results and a conjecture in this context.
First, due to the classification of semisimple Lie algebras, it is clear that any semisimple $K^s$-Lie algebra is defined over $K$.

On the other hand, it is classical that there exist Lie algebras over $\C$ with no real form: for instance, consider the $3$-dimensional complex Lie algebra with basis $(x,y,z)$ and commutators:
\[
[x,y]=0, \, [x,z]=2x, \, [y,z]=iy \, .
\]
Then one checks easily that this Lie algebra is not isomorphic to its conjugate, hence it has no real form.

In \cite{dere19}, Conjecture 1 states that any complex Lie algebra isomorphic to its conjugate should be defined over $\R$ and Proposition 5.8 of the same paper proves this conjecture for Lie algebras of dimension at most $4$, and mentions that it holds for all complex nilpotent Lie algebras of dimension at most $7$. 

Concerning the particular case of 2-step nilpotent Lie algebras, a consequence of the work \cite{bddg24} is that any complex $2$-step nilpotent Lie algebra of dimension $\leq 8$ is defined over $\R$. All these positive results are based on the classification of low-dimensional Lie algebras.

The main result of this paper is the following Theorem, which disproves the aforementioned conjecture:
\begin{theorem} \label{thm complex lie}
    There exists a $10$-dimensional complex 2-step nilpotent Lie algebra $\mathfrak{g}$ that is isomorphic to its conjugate and not defined over $\R$.
\end{theorem}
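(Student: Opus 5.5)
The approach is to reduce to a Galois cohomology / gerbe obstruction and realize a nontrivial class in $\Br(\R)=\mathbb{Z}/2$ by means of the automorphism group of a suitable 2-step nilpotent algebra.

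\textbf{Reformulation.} Suppose $\mathfrak{g}$ is isomorphic to its conjugate $\bar{\mathfrak g}$. Choose a semilinear bijection $\sigma:\mathfrak g\to\mathfrak g$ that preserves the bracket; it exists by composing an isomorphism $\mathfrak g\cong\bar{\mathfrak g}$ with the tautological antilinear identification. Then $\sigma^2\in \mathrm{Aut}(\mathfrak g)$. Moreover, $\mathfrak g$ has a real form if and only if $\sigma$ can be modified by some $\phi\in\mathrm{Aut}(\mathfrak g)$ so that $(\phi\sigma)^2=\mathrm{id}$. This is the standard descent criterion: a real form is the fixed set of an antilinear involutive automorphism.

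So the plan is to construct $\mathfrak g$ for which the coset $\mathrm{Aut}(\mathfrak g)\sigma$ contains no involution. The cleanest way to force this is to make the reductive part of $\mathrm{Aut}(\mathfrak g)$ (modulo the unipotent radical, which is cohomologically trivial over $\R$) small. Ideally it should be a finite group or a torus on which $\sigma$ acts like the quaternionic structure. The model case is a conjugation whose square is $-1$ on $\mathbb{C}^2$, commuting with only scalars: this is the nontrivial class of $\Br(\R)$, i.e.\ $\mathbb{H}$.

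\textbf{Construction.} I would take $\mathfrak g = V\oplus W$ with $[V,V]\subset W$, $W$ central, $\dim V=4$, $\dim W=6$. Identify $V=\mathbb C^2\otimes\mathbb C^2$, or better $V$ as $\mathbb{H}\otimes_\R\mathbb C$ with its quaternionic structure $j$. Note $\Lambda^2 V$ has dimension $6$, so taking $W=\Lambda^2V$ would give the free 2-step algebra, which has too many automorphisms. Instead the bracket should be given by a generic $6$-dimensional family of alternating forms. Concretely, I would pick a subspace $U\subset\Lambda^2V^*$ and set $\mathfrak g_U=V\oplus U^*$. Isomorphism classes of such algebras correspond to $\GL(V)$-orbits of subspaces $U$, and the 10-dimensional count suggests $\dim V=5$, $\dim W=5$. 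In that case $U$ is a point of $\mathrm{Gr}(5,\Lambda^2\C^5)=\mathrm{Gr}(5,10)$, of dimension $25$, acted on by $\PGL_5$ of dimension $24$. So generic stabilizers are finite, and there is a $1$-dimensional moduli.

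The plan is to choose $U$ whose stabilizer $G\subset\PGL_5$ is finite. The $\GL(V)$-stabilizer then contains scalars $\Gm$, which is connected with trivial $H^1$. I would also arrange that the orbit of $U$ is stable under conjugation, i.e.\ $\bar U=g\cdot U$ for some $g$, while no real form exists. This amounts to the field-of-moduli gerbe being nonneutral. The obstruction lives in $H^2$ of the stabilizer, and I aim for it to map to the nontrivial element of $\Br(\R)$ via an extension $1\to\Gm\to\widetilde G\to G\to 1$. Explicitly, I would pick $U$ invariant under a Heisenberg-type finite group (e.g.\ $G\cong(\mathbb Z/5)^2$ acting projectively on $\C^5$ by the Schrödinger representation) together with an antiholomorphic map. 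The hope is that lifting this combination to $\GL_5$ forces $(\phi\sigma)^2$ to be a scalar $\lambda$ that cannot be made $1$. With an odd-dimensional $V$ this fails, since scalars can be adjusted (a $-1$ obstruction dies in odd degree). So I expect to need the splitting $\dim V=4$, or else an obstruction coming from the finite group part rather than from scalars.

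\textbf{Verification steps.} (1) Write down explicit structure constants with complex parameters. (2) Compute $\mathrm{Aut}(\mathfrak g)$, showing it is $\mathrm{Hom}(V,W)\rtimes \widetilde G$ with $\widetilde G$ explicit. (3) Exhibit $\sigma$. (4) Show that for every $\phi\in\widetilde G$, $(\phi\sigma)^2\ne 1$, by enumerating cosets or via a cohomological argument. The unipotent part is handled by the vanishing of $H^1$ for vector groups. The main obstacle is step (2): proving that the automorphism group is no bigger than expected. This requires showing that the stabilizer of $U$ in $\GL(V)$ is exactly the designed group. I would attempt it via invariants of the pencil of forms, such as the degeneracy locus: Pfaffians of the family of forms give a hypersurface in $\mathbb P(U)$ whose projective automorphisms must be preserved. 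I would then choose parameters making that hypersurface have a small, computable symmetry group.
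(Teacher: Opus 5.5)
There is a genuine gap: the proposal never settles on a configuration in which your mechanism can work. Steps (2)--(4), which carry all the content, are left as hopes.

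\textbf{Neither dimension split you consider is viable.} For $\dim V=4$, $\dim W=6$, your algebra $\mathfrak g_U$ is forced to be the free 2-step nilpotent algebra $V\oplus\Lambda^2V$, as you note yourself, and this algebra is defined over $\mathbb{Q}$. More generally, for $\dim V=4$ the group $\PGL_4$ (dimension $15$) acts on $\textup{Gr}(k,\Lambda^2\C^4)$ (dimension $k(6-k)\le 9$) with stabilizers of dimension at least $6$. It even has only finitely many orbits, since $\mathrm{SL}_4/\mu_2\cong\mathrm{SO}_6$ acts through the wedge pairing and Witt's theorem applies. So your closing expectation that one needs $\dim V=4$ contradicts your own earlier observation.

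For $\dim V=\dim W=5$ you correctly see that the scalar obstruction disappears: $V$ admits no quaternionic structure and $H^1(\R,\PGL_5)=0$. The fallback, an obstruction from a finite Heisenberg-type stabilizer, is entirely speculative. The count $25$ versus $24$ does not by itself make generic stabilizers finite. Moreover, $(n,k)=(5,5)$ is one of the exceptional cases excluded from the Elashvili--Popov generic freeness results. You would have to determine that stabilizer exactly and analyse the resulting non-abelian gerbe over $\R$, and none of this is attempted. The split that works, which you never consider, is $\dim\mathfrak g/\mathfrak g'=6$, $\dim\mathfrak g'=4$.

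\textbf{Even with that split, your plan stalls at step (2)}, the exact computation of $\mathrm{Aut}(\mathfrak g)$ for an explicit algebra. The paper does not produce an explicit example either. The missing idea is to replace explicit construction by genericity:
\begin{itemize}
\item Let $J$ be a quaternionic structure on $V=\C^6\cong\mathbb H^3$, i.e.\ antilinear with $J^2=-1$. Since $\Lambda^2(-\mathrm{id}_V)=\mathrm{id}$, we get $(\Lambda^2J)^2=1$, so $\Lambda^2J$ is an honest real structure on $\Lambda^2V$.
\item Hence the $\Lambda^2J$-stable $11$-planes are the real points of a \emph{split} real form of $\textup{Gr}(11,\Lambda^2V)$, and so they are Zariski dense. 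This is the paper's twisting lemma: the cocycle comes from $\GL_6/\mu_2$ (e.g.\ from $M_3(\mathbb H)$), so Hilbert 90 for $\GL(\Lambda^2V)$ trivializes the twisted Grassmannian.
\item By Elashvili--Popov, $\PGL_6$ acts generically freely on $\textup{Gr}(11,\Lambda^2V)\cong\textup{Gr}(4,\Lambda^2V^*)$. Therefore some $\Lambda^2J$-stable $F$ has $\GL(V)$-stabilizer reduced to scalars.
\item For $\mathfrak g=V\oplus\Lambda^2V/F$, the pair $(J,\Lambda^2J)$ induces an antilinear automorphism, so $\mathfrak g\cong\overline{\mathfrak g}$.
\item Any antilinear automorphism induces on $\mathfrak g/\mathfrak g'=V$ a map $\lambda J$ with $\lambda\in\C^\times$. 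Its square is $-|\lambda|^2\neq 1$, so by your own descent criterion $\mathfrak g$ has no real form.
\end{itemize}
Your ``model case'' $\mathbb H$ is exactly the right obstruction; it is the class of $A$ in the paper. But it must live on a $6$-dimensional $V$ and be combined with generic freeness and the density of twisted real points, not with an explicit automorphism computation.
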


Note that we prove the existence of such a Lie algebra, but we did not manage to construct an \emph{explicit} example (defined by generators and relations for instance).\\

\paragraph{\bf Acknowledgements:} The author warmly thanks Boris Kunyavskii for several helpful discussions about the field of definition problem, and for asking the aforementioned question for Lie algebras.

\section{Main result}

In order to prove Theorem \ref{thm complex lie}, and even a more general version of it, we first study the moduli space of 2-step nilpotent Lie algebras.

Let $K$ be a characteristic zero field, $n \geq 1$ and $1 \leq k \leq {n \choose 2}$. Define $m := {n \choose 2} - k$. Let $V_n$ be a $K$-vector space of dimension $n$ and recall that there is a natural action of the $K$-group $\GL(V_n)$ on the grassmannian variety $\textup{Gr}(k, \Lambda^2(V_n))$. Let $\textup{Lie}_2(n,m)(K)$ be the category of 2-step nilpotent $K$-Lie algebras $\mathfrak{g}$, with derived Lie algebra $\mathfrak{g}'$, such that $\dim_K(\mathfrak{g}') = m$ and $\dim_K(\mathfrak{g}/\mathfrak{g}') = n$.

Define a functor $\phi : \textup{Gr}(k, \Lambda^2 V_n)(K) \to \textup{Lie}_2(n,m)(K)$ as follows: if $F_k \subset \Lambda^2 V_n$ is a $K$-subspace of dimension $k$, define $\phi(F_k)$ to be the $K$-Lie algebra $V_n \oplus \left(\Lambda^2 V_n / F_k \right)$, with the Lie bracket $[(v,\overline{w}), (v', \overline{w}')] := (0, \overline{v \wedge v'})$, where $\overline{w}$ denotes the image of a vector $w \in \Lambda^2 V_n$ in the quotient $\Lambda^2 V_n / F_k$.

\begin{lemma} \label{lem mod space}
The map $\phi$ induces a bijection 
\[\overline{\phi} : \textup{Gr}(k, \Lambda^2 V_n)(K) / \GL(V_n)(K) \xrightarrow{\sim} \textup{Lie}_2(n,m)(K) / \sim \, ,\] 
where the right-hand side denotes the set of isomorphism classes in the category  $\textup{Lie}_2(n,m)(K)$.
\end{lemma}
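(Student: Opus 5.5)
We prove three things in turn: $\phi(F_k)$ lies in $\textup{Lie}_2(n,m)(K)$, $\overline{\phi}$ is well defined and surjective, and $\overline{\phi}$ is injective.

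\textbf{Well-definedness.} First check that $\phi(F_k)$ is an object of $\textup{Lie}_2(n,m)(K)$. The bracket is alternating. It satisfies the Jacobi identity trivially, since every double bracket vanishes because brackets land in the central summand $\Lambda^2 V_n/F_k$. The derived algebra is exactly $0 \oplus \Lambda^2 V_n/F_k$, since the elements $v\wedge v'$ span $\Lambda^2 V_n$. This summand is central, so the algebra is $2$-step nilpotent (it is nonzero when $m \geq 1$; the degenerate case $m=0$ is abelian and is handled in the same way). Hence $\dim \mathfrak{g}' = \binom n2 - k = m$ and $\dim \mathfrak{g}/\mathfrak{g}' = n$.

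Next, if $g \in \GL(V_n)(K)$, then $g \oplus \Lambda^2 g$ induces an isomorphism $\phi(F_k) \to \phi(\Lambda^2 g(F_k))$, because it is compatible with the bracket. So $\overline{\phi}$ is well defined on orbits.

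\textbf{Surjectivity.} Let $\mathfrak{g}$ be in $\textup{Lie}_2(n,m)(K)$. Choose a linear complement $W$ of $\mathfrak{g}'$ in $\mathfrak{g}$ and an isomorphism $V_n \cong W$. Since $\mathfrak{g}$ is $2$-step nilpotent, $\mathfrak{g}' \subset Z(\mathfrak{g})$. Hence the bracket is determined by its restriction to $W \times W$, which is an alternating bilinear map and so induces a linear map $\beta : \Lambda^2 V_n \to \mathfrak{g}'$.

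This $\beta$ is surjective. Indeed, $\mathfrak{g}' = [\mathfrak{g},\mathfrak{g}] = [W+\mathfrak{g}', W+\mathfrak{g}'] = [W,W]$ because $\mathfrak{g}'$ is central. Set $F_k := \ker \beta$; it has dimension $\binom n2 - m = k$. Then $(v,w) \mapsto v + \overline\beta(w)$, where $\overline\beta$ is the isomorphism induced by $\beta$ on the quotient, is a Lie algebra isomorphism $\phi(F_k) \to \mathfrak{g}$.

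\textbf{Injectivity.} This is the step needing care. Suppose $f : \phi(F_k) \to \phi(F'_k)$ is an isomorphism. It preserves derived algebras, so it induces a linear isomorphism $g : V_n \cong \mathfrak{g}/\mathfrak{g}' \to \mathfrak{g}/\mathfrak{g}' \cong V_n$ (using the canonical identification $V_n \cong \phi(F)/\phi(F)'$). It also induces $h : \Lambda^2 V_n/F_k \to \Lambda^2 V_n/F'_k$.

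Write $f(v,0) = (g(v), \lambda(v))$ for some linear map $\lambda$. The bracket ignores the central components, so
\[
h(\overline{v \wedge v'}) = f([(v,0),(v',0)]) = [f(v,0), f(v',0)] = \overline{g(v)\wedge g(v')},
\]
and the $\lambda$ terms drop out. Hence $h \circ \pi = \pi' \circ \Lambda^2 g$, where $\pi, \pi'$ are the quotient maps. Taking kernels gives $\Lambda^2 g(F_k) = F'_k$, so $F_k$ and $F'_k$ lie in the same $\GL(V_n)(K)$-orbit.

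The only subtlety is that $f$ need not preserve the chosen complement $V_n$. This is handled by the observation that brackets only depend on classes modulo the center, which is what makes the $\lambda$ terms vanish above.
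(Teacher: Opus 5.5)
Your proposal is correct and follows essentially the same route as the paper. Surjectivity comes from choosing a splitting so that the bracket gives a surjection $\Lambda^2 V_n \to \mathfrak{g}'$ with kernel $F$, and injectivity comes from the induced map $g \in \GL(V_n)(K)$ on abelianizations satisfying $\Lambda^2 g(F) = F'$. You add details the paper leaves implicit: that $\phi(F)$ actually lies in the category, and the explicit computation showing that the central components of $f(v,0)$ drop out of the bracket.
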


\begin{proof}
Let $F, F' \in \textup{Gr}(k, \Lambda^2 V_n)(K)$ and $\varphi : \phi(F) \xrightarrow{\sim} \phi(F')$ an isomorphism of Lie algebras.
Then $\varphi$ induces a linear isomorphism $\overline{\varphi}$ between $\phi(F)^\textup{ab} = V_n$ and $\phi(F')^\textup{ab} = V_n$, i.e. $\overline{\varphi} \in \GL(V_n)(K)$. Since $\varphi$ is an isomorphism of Lie algebras, the morphism $\Lambda^2\overline{\varphi} \in \GL(\Lambda^2 V_n)(K)$ maps $F$ onto $F'$, hence $F$ and $F'$ are in the same $\GL(V_n)(K)$-orbit in $\textup{Gr}(k, \Lambda^2 V_n)(K)$. On the other hand, if two subspaces in $\textup{Gr}(k, \Lambda^2 V_n)(K)$ are in the same $\GL(V_n)(K)$-orbit, then they define isomorphic Lie algebras. Therefore, we get an injective map \[\overline{\phi} : \textup{Gr}(k, \Lambda^2 V_n)(K) / \GL(V_n)(K) \to \textup{Lie}_2(n,m)(K) / \sim \, .\]
Let us prove that this map is surjective. Let $\mathfrak{g} \in  \textup{Lie}_2(n,m)(K)$. By assumption, $\dim_K(\mathfrak{g}/\mathfrak{g}') = n$, so that one can find a linear isomorphism $\mathfrak{g}/\mathfrak{g}' \cong V_n$. Under this identification, the Lie bracket on $\mathfrak{g}$ defines a surjective linear map $\lambda : \Lambda^2 V_n \to \mathfrak{g}'$. Let $F := \ker(\lambda)$. Then $\dim_K(F)=k$, i.e. $F \in \textup{Gr}(k, \Lambda^2 V_n)$. One checks that the choice of a linear splitting of $\mathfrak{g} \to \mathfrak{g}/\mathfrak{g}'$ defines a Lie algebra isomorphism between $\mathfrak{g}$ and $\mathfrak{g}/\mathfrak{g}' \oplus \mathfrak{g}' \cong V_n \oplus (\Lambda^2 V_n)/F = \phi(F)$, hence the map $\overline{\phi}$ is surjective.
\end{proof}

Let $G$ denote the algebraic $K$-group $\GL(V_n)$ and $\overline{G} := \PGL(V_n)$.

\begin{proposition} \label{prop gen free}
    Assume that $3 \leq k \leq \frac{1}{2} {n \choose 2}$ and $(n,k) \neq (4,3), (5,3), (6,3), (5,4), (5,5)$.
    then the natural action of $\overline{G}$ on $Y := \textup{Gr}(k, \Lambda^2 V_n)$ is generically free.
\end{proposition}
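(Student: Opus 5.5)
The plan is to reduce the statement to a known result on generic stabilizers of $\GL(V_n)$ acting on Grassmannians of $\Lambda^2 V_n$. Where such a result does not cover all cases, I would supply a dimension-count and induction argument. Generic freeness can be checked after base change to an algebraic closure $\overline{K}$, so we may assume $K$ algebraically closed. The claim is then that there is a dense open subset $U \subset Y$ on which the scheme-theoretic stabilizer in $\overline{G}$ is trivial. In characteristic zero, stabilizers are smooth, so it suffices to show two things: the generic stabilizer has trivial Lie algebra, and it has trivial group of $\overline{K}$-points.

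First, a reduction. Via the natural perfect pairing $\Lambda^2 V_n \times \Lambda^2 V_n^* \to K$, sending $F$ to $F^\perp$ identifies $\textup{Gr}(k,\Lambda^2 V_n)$ with $\textup{Gr}({n\choose 2}-k, \Lambda^2 V_n^*)$. This identification is $G$-equivariant once one twists by the inverse transpose. It is also compatible with the reduction $k \le \frac12{n\choose 2}$ in the hypothesis. The scalars $\Gm \subset G$ act trivially on $Y$, since they act on $\Lambda^2 V_n$ by homotheties, so the action factors through $\overline{G}$.

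The main tool is a known classification: for $n\ge 4$, the generic stabilizer of $\mathrm{SL}_n$, or $\GL_n$, acting on $\textup{Gr}(k,\Lambda^2 V_n)$ has been computed by Popov and Elashvili, in their tables of generic stabilizers for actions on Grassmannians of irreducible representations. These tables show that the generic stabilizer in $\PGL_n$ is trivial except in finitely many small cases. Those exceptions are exactly the excluded pairs $(4,3),(5,3),(6,3),(5,4),(5,5)$, together with $k\le 2$ (pencils of skew forms, where the stabilizer is positive-dimensional) and their duals.

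If I wanted a self-contained argument instead, I would proceed in two steps.

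(a) Show that the generic Lie-algebra stabilizer $\mathfrak{pgl}_n$ is trivial. This amounts to showing that the infinitesimal action map $\mathfrak{pgl}(V_n) \to T_F Y = \mathrm{Hom}(F, \Lambda^2V_n/F)$ is injective for one explicit $F$. The dimension inequality $n^2-1 \le k({n\choose 2}-k)$ holds in the range considered. Injectivity can be verified by exhibiting an $F$ spanned by $k$ sufficiently generic skew forms, for example $k$ forms in "normal position" built from a symplectic form and diagonal perturbations. I would then argue by induction on $n$ and $k$, using semicontinuity of the stabilizer dimension.

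(b) Show that the generic finite stabilizer is trivial. Here I would use the standard criterion: if some element $g\neq 1$ of $\overline{G}$ fixed a generic point, then, since there are finitely many conjugacy classes of possible stabilizers, some fixed-point locus $Y^g$, swept out by conjugates, would have to dominate $Y$. So it suffices to prove the inequality
\[
\dim Y^g + \dim(\text{conjugacy class of } g) < \dim Y
\]
for every $g$ of prime order, and for unipotent $g$. This reduces to linear algebra: $Y^g$ is a union of products of Grassmannians of the eigenspaces of $g$ on $\Lambda^2 V_n$. One therefore has to bound the resulting dimensions in terms of the eigenvalue multiplicities of $g$ on $V_n$.

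I expect step (b) to be the main obstacle. The inequality is comfortable for large $n$, but it becomes tight near the boundary of the range: for small $k$ such as $k=3,4$, and for small $n$. This is precisely where the exceptional pairs appear. There, pseudo-reflections and involutions with a large eigenspace have the largest fixed loci. The first thing to try is to treat $g$ with eigenspace dimensions $(n-1,1)$, $(a,n-a)$ and unipotent transvections explicitly, and to show that these dominate all other cases. I would verify the remaining small cases, $n\le 7$, by direct computation or by citing the Popov–Elashvili tables.
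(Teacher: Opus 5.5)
Your main line is the same as the paper's: the proposition is read off from Elashvili's and Popov's classification of generic stabilizers, not proved from scratch. What you leave out is the step that makes that classification applicable at all.

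\textbf{The missing bridge.} Those tables do not concern Grassmannians. They list irreducible \emph{linear} representations of semisimple groups (here $\mathrm{SL}_n\times\mathrm{SL}_k$ on $\Lambda^2K^n\otimes K^k$) together with their generic stabilizers. The paper connects the two with Elashvili's Lemma~1: the generic stabilizer of $\GL(V_n)/\mu_2$ on $\textup{Gr}(k,\Lambda^2V_n)$ is canonically the generic stabilizer of $\GL(V_n)/\mu_2\times\GL_k$ on $\Lambda^2V_n\otimes K^k$ (a generic $k$-frame versus its span). Consequently it suffices that $\PGL(V_n)\times\PGL_k$ act generically freely on $\mathbb{P}(\Lambda^2V_n\otimes K^k)$, i.e.\ that the generic stabilizer there be exactly the central $\Gm\times\Gm$. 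The paper gets this outside the excluded pairs from two sources:
\begin{itemize}
\item Elashvili's Tables~5--6, which control the generic stationary subalgebra;
\item Popov's 1978 results, which single out the representations with zero stationary subalgebra but nontrivial finite generic stabilizer.
\end{itemize}
Both are needed, since the first alone says nothing about finite stabilizers. With this bridge stated, your citation argument coincides with the paper's proof.

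\textbf{The self-contained alternative.} Steps (a)--(b) are not a proof and should not be presented as a fallback.
\begin{itemize}
\item In (a), semicontinuity correctly reduces the Lie-algebra claim to exhibiting one $F$ with injective infinitesimal action, but you never exhibit it.
\item The proposed induction on $n$ and $k$ has no mechanism. The stabilizer of a $(k+1)$-dimensional $F'\supset F$ need not be contained in that of $F$, and nothing analogous relates stabilizers for $V_n\subset V_{n+1}$.
\item In (b), you would have to treat all eigenvalue patterns of elements of prime order, not just reflections, involutions and transvections. There are finitely many primes to consider only once you bound the order of the generic stabilizer.
\item The count $\dim Y^g+\dim(\text{class of }g)<\dim Y$ is only a sufficient criterion. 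By your own account it becomes tight exactly near the excluded pairs, where you defer to the tables anyway.
\end{itemize}
So everything decisive in your proposal still rests on the citation, as in the paper.
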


\begin{proof}
By \cite{ela72}, Lemma 1, the generic stabilizer for the action of $\GL(V_n)/\mu_2 \times \GL_k$ on $(\Lambda^2 V_n) \times K^k$ is canonically isomorphic to the generic stabilizer for the action of $\GL(V_n)/\mu_2$ on $\textup{Gr}(k,\Lambda^2 V_n)$. We deduce that the action of $\overline{G} := \PGL(V_n)$ on $\textup{Gr}(k,\Lambda^2 V_n)$ is generically free if the generic stabilizer for the action of $\GL(V_n)/\mu_2 \times \GL_k$ on $\mathbb{P}((\Lambda^2 V_n) \times K^k)$ is exactly the center $\Gm \times \Gm$.

Then tables 5 and 6 in \cite{ela72}, together with the second paragraph in \cite{pop78} and table 1 in \cite{pop78}, imply that under the assumptions on $(n,k)$, the action of $\PGL(V_n) \times \PGL_k$ on $\mathbb{P}((\Lambda^2 V_n) \times K^k)$ is generically free, hence also that of $\overline{G}$ on $Y := \textup{Gr}(k, \Lambda^2 V_n)$.
\end{proof}

Under the assumptions of the proposition, let $Y := \textup{Gr}(k, \Lambda^2 V_n)$. Then there exists a $\overline{G}$-invariant non-empty Zariski open subset $U \subset Y$ such that the action of $\overline{G}$ on $U$ is free. Then the quotient $X := U/G$ is a $K$-variety and the natural morphism $\pi : U \to X$ is a $\overline{G}$-torsor.

In particular, we have a partition
\begin{equation}\label{partition}
X(K) = \bigsqcup_{[\sigma] \in H^1(K,\overline{G})} \pi^\sigma (U^\sigma(K)) \, , 
\end{equation}
where $\pi^\sigma : U^\sigma \to X$ denotes the twist of the torsor $\pi$ by the cocycle $\sigma$.

More precisely, if $X(K) \xrightarrow{\partial} H^1(K, \overline{G})$ denotes the map that associates to a $K$-point $x \in X(K)$ the class of the $\overline{G}$-torsor $\pi^{-1}(x)$, then for any $[\sigma] \in H^1(K,\overline{G})$, a point $x \in X(K)$ is in the image of $\pi^\sigma : U^\sigma(K) \to X(K)$ if and only if $\partial(x) = [\sigma]$. And $\pi : U(K) \to X(K)$ is not surjective if (and only if) there exists a non-trivial $[\sigma] \in H^1(K,\overline{G})$ such that $U^\sigma(K) \neq \emptyset$.

\begin{lemma} \label{lem twist}
    For all $[\sigma] \in \textup{Im}(H^1(K, \GL(V_n)/\mu_2) \to H^1(K, \overline{G}))$, $U^\sigma(K) \neq \emptyset$.
\end{lemma}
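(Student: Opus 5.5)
The plan is to realize the twisted torsor as an open subset of a twisted Grassmannian, and to show that this twisted Grassmannian is $K$-rational; then its dense open subset has $K$-points because $K$ is infinite.

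Let $G_2 := \GL(V_n)/\mu_2$, which acts on $\Lambda^2 V_n$ because $-1$ acts trivially there, and hence on $Y = \textup{Gr}(k,\Lambda^2 V_n)$. The action of $\overline{G}$ on $Y$ is induced from that of $G_2$ through the quotient $G_2 \to \overline{G}$.

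Let $[\sigma]$ be the image of a class $[\tau] \in H^1(K, G_2)$, with $\tau$ a cocycle lifting $\sigma$. Then the twist $Y^\sigma$ of $Y$ by $\sigma$, via the $\overline{G}$-action, coincides with the twist $Y^\tau$ by $\tau$, via the $G_2$-action. Since $U$ is a $\overline{G}$-invariant open subset, $U^\sigma = U^\tau$ is a non-empty open subvariety of $Y^\tau$.

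Next I would identify $Y^\tau$. The representation $G_2 \to \GL(\Lambda^2 V_n)$ gives a map $H^1(K,G_2) \to H^1(K, \GL(\Lambda^2 V_n))$. By Hilbert 90 the target is trivial, so the twist $W := (\Lambda^2 V_n)^\tau$ is a $K$-vector space of dimension ${n \choose 2}$. Functoriality of twisting for the equivariant identification of $Y$ with $\textup{Gr}(k, \Lambda^2 V_n)$ then gives $Y^\tau \cong \textup{Gr}(k, W) \cong \textup{Gr}(k, {n \choose 2})$ over $K$.

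Concretely, $\tau$ maps to a cocycle in $\GL(\Lambda^2 V_n)$, which is a coboundary $g^{-1}\gamma(g)$. Then $g$ induces a $K$-isomorphism from the twisted Grassmannian to the split one, and it is compatible with the Galois actions.

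This Grassmannian is $K$-rational, so its $K$-points are Zariski dense because $K$ is infinite (characteristic zero). Hence the non-empty open subset $U^\tau = U^\sigma$ has a $K$-point.

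The main point of care is the compatibility of the twists. Twisting $U$ by $\sigma$ through the $\overline{G}$-torsor structure must agree with twisting $Y$ by $\tau$ through the linear action. This holds because the $\overline{G}$-action on $Y$ factors as $G_2 \to \overline{G}$ acting, so the twisted Galois action $\gamma * y = \sigma_\gamma \cdot \gamma(y)$ equals $\tau_\gamma \cdot \gamma(y)$. Note also that the $\overline{G}$-action on $Y$ is defined via $G_2$; we cannot pass through $\GL(V_n)$ itself, since $\PGL$ does not act linearly on $\Lambda^2 V_n$. This is exactly why the hypothesis requires $[\sigma]$ to lift to $H^1(K, G_2)$.
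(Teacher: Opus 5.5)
Your proof is correct and follows essentially the same route as the paper. You lift $\sigma$ to $\GL(V_n)/\mu_2$ and push the class into $\GL(\Lambda^2 V_n)$, where Hilbert 90 trivializes it, so $U^\sigma$ becomes a non-empty open subset of a split Grassmannian whose $K$-points are Zariski dense. The only harmless difference is that you use just the fact that $\mu_2$ acts trivially on $\Lambda^2 V_n$, whereas the paper cites Waterhouse's theorem that the kernel of $\GL(V_n) \to \GL(\Lambda^2 V_n)$ is exactly $\mu_2$, an injectivity the argument does not actually need.
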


\begin{proof}
    By \cite{W82}, Theorem 4, the kernel of the morphism $\GL(V_n) \to \GL(\Lambda^2 V_n)$, defined by $g \mapsto \Lambda^2(g)$, is exactly the subgroup $\mu_2 \subset \GL(V_n)$.
    Therefore, there is a natural commutative diagram of algebraic groups :
    \[
    \xymatrix{
    \GL(V_n) \ar@{->>}[r] \ar[d] & \GL(V_n)/\mu_2 \ar@{->>}[r] \ar@{^{(}->}[d] & \PGL(V_n) \ar@{^{(}->}[d] \\
    \GL(\Lambda^2 V_n) \ar[r]^= & \GL(\Lambda^2 V_n) \ar@{->>}[r] & \PGL(\Lambda^2(V_n)) \, .
    }
    \]
    Let $[\sigma_0] \in H^1(K, \GL(V_n)/\mu_2)$ and denote by $[\sigma]$ its image in $H^1(K, \PGL(V_n))$, $[\sigma_0']$ its image in $H^1(K, \GL(\Lambda^2 V_n))$ and $[\sigma']$ its image in $H^1(K, \PGL(\Lambda^2 V_n))$. Since $\PGL(V_n)$ acts on $U$ and $\PGL(\Lambda^2 V_n)$ acts on $Y$ in a compatible way, one can twist the open immersion $U \to Y$ to get an open immersion $U^{\sigma_0} = U^\sigma \to Y^{\sigma'} = Y^{\sigma_0'}$. By Hilbert 90, the set $H^1(K, \GL(\Lambda^2 V_n))$ is trivial, hence $Y^{\sigma'} = Y^{\sigma_0'} \cong Y$. Therefore $U^\sigma$ is an non-empty open subset of $Y = \textup{Gr}(k, \Lambda^2 V_n)$. But rational points $Y(K)$ are Zariski-dense in the Grassmannian variety $Y$ (since the Grassmannian is a rational variety), so $U^\sigma(K) \neq \emptyset$.
\end{proof}

Assume now that there exists a $K$-central simple algebra $A$ of period $2$ and index dividing $n$. This is equivalent to saying that $n$ is even and $\textup{Br}(K)$ has non-trivial $2$-torsion, i.e. that $n$ is even and there exists a non-split quaternion algebra over $K$.

\begin{lemma} \label{lem not surj}
If $n$ is even and $\textup{Br}(K)[2] \neq 0$, then $\pi : U(K) \to X(K)$ is not surjective.
\end{lemma}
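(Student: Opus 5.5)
Using the criterion stated just before Lemma \ref{lem twist}, it suffices to find a non-trivial class $[\sigma] \in H^1(K,\overline{G})$ with $U^\sigma(K) \neq \emptyset$. By Lemma \ref{lem twist}, any non-trivial class in the image of $H^1(K, \GL(V_n)/\mu_2) \to H^1(K, \PGL(V_n))$ will do. So the task reduces to a Galois cohomology computation: show that this image contains a non-trivial element.

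For the computation I would use the diagram of central extensions
\[
1 \to \Gm \to \GL(V_n)/\mu_2 \to \PGL(V_n) \to 1 ,
\]
where $\Gm = \Gm/\mu_2$ is the image of the scalars in $\GL(V_n)/\mu_2$, identified with $\Gm$ via $t \mapsto t^2$. Compare it with $1 \to \Gm \to \GL(V_n) \to \PGL(V_n) \to 1$ via the squaring map on the kernels. The associated exact sequence in non-abelian cohomology gives that $[\sigma] \in H^1(K,\PGL(V_n))$ lifts to $H^1(K,\GL(V_n)/\mu_2)$ if and only if its coboundary in $H^2(K, \Gm/\mu_2) = \Br(K)$ vanishes. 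By functoriality of coboundaries under the map of extensions (the squaring map $\Gm \to \Gm/\mu_2 \cong \Gm$), this coboundary equals $2\,\delta([\sigma])$, where $\delta : H^1(K,\PGL(V_n)) \to \Br(K)$ is the usual map sending $[\sigma]$ to the class of the corresponding central simple algebra of degree $n$.

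Hence $[\sigma]$ lies in the image exactly when the associated degree-$n$ central simple algebra $B$ satisfies $2[B] = 0$ in $\Br(K)$. Now take a non-split quaternion algebra $Q$, which exists since $\Br(K)[2] \neq 0$. Since $n$ is even, write $n = 2r$ and set $B := M_r(Q)$. Then $B$ is central simple of degree $n$, its class $[Q]$ is non-zero, and $2[Q]=0$. The corresponding $[\sigma] \in H^1(K,\PGL(V_n))$ is non-trivial, because $B$ is not split and $\delta$ is injective on $H^1(K, \PGL_n)$. It is also in the image of $H^1(K,\GL(V_n)/\mu_2)$. Lemma \ref{lem twist} then gives $U^\sigma(K) \neq \emptyset$, so $\pi$ is not surjective.

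The main obstacle is the sign and identification check that the obstruction for lifting along $\GL(V_n)/\mu_2 \to \PGL(V_n)$ is $2\delta$ rather than $\delta$. This requires the identification $\Gm/\mu_2 \cong \Gm$ via squaring, together with compatibility of coboundaries under morphisms of central extensions. I would verify it at the level of cocycles: lift a cocycle with values in $\PGL_n$ to $\GL_n$-valued cochains, and observe that the resulting $2$-cocycle in $\Gm/\mu_2$ corresponds to the square of the $\Gm$-valued one.
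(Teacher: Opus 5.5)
Your argument is correct and follows the same route as the paper: exhibit a non-trivial class in the image of $H^1(K,\GL(V_n)/\mu_2)\to H^1(K,\PGL(V_n))$, then conclude with Lemma~\ref{lem twist} and the partition \eqref{partition}. The paper instead uses the coboundary $H^1(K,\GL(V_n)/\mu_2)\to H^2(K,\mu_2)$ and the injectivity of the maps in its square, and it asserts without proof that $H^1(K,\GL(V_n)/\mu_2)$ is non-trivial; your computation of the lifting obstruction as $2\delta$, together with the explicit algebra $M_{n/2}(Q)$, supplies that justification.
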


\begin{proof}
 The assumption implies that in the following natural commutative diagram (where the horizontal maps are the obvious coboundary maps)
\[
\xymatrix{
H^1(K, \GL(V_n)/\mu_2) \ar@{^{(}->}[r] \ar@{^{(}->}[d] & H^2(K, \mu_2) \ar@{^{(}->}[d] \\
H^1(K, \PGL(V_n)) \ar@{^{(}->}[r] & \textup{Br}(K) = H^2(K, \Gm) \, ,
}
\]
all the maps are injective, and the set $H^1(K, \GL(V_n)/\mu_2)$ is not trivial.
Choose a non-trivial class $[\sigma] \in H^1(K, \GL(V_n)/\mu_2)$. Then the previous Lemma implies that $U^\sigma(K) \neq \emptyset$, which garantees that $U(K) \to X(K)$ is not surjective (see \eqref{partition}).
\end{proof}

The following result implies Theorem \ref{thm complex lie} about complex and real Lie algebras:

\begin{theorem}\label{thm main}
If $\textup{Br}(K)[2] \neq 0$, there exists a $10$-dimensional 2-step nilpotent Lie algebra $\mathfrak{g}$ over $K^s$ that is isomorphic to all its Galois-conjugates and that is not defined over $K$.
\end{theorem}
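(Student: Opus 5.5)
We choose parameters so that $\dim \mathfrak{g} = n + m = 10$ with $n$ even and $(n,k)$ allowed by Proposition \ref{prop gen free}. Take $n = 6$, so ${n \choose 2} = 15$ and $m = 4$, hence $k = 11$. But $k \le \frac{1}{2}{n\choose 2}$ fails, so we use duality instead: $\textup{Gr}(11, \Lambda^2 V_6) \cong \textup{Gr}(4, \Lambda^2 V_6^*)$, and this is $\GL(V_n)$-equivariant up to the contragredient automorphism. Generic freeness of $\overline{G}$ on $\textup{Gr}(k,\Lambda^2 V_n)$ is therefore equivalent to generic freeness on $\textup{Gr}({n\choose 2}-k, \Lambda^2 V_n)$. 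With $k' = 4 \le 7.5$ and $(6,4)$ not in the excluded list, Proposition \ref{prop gen free} gives a free open $U$ and $X = U/\overline{G}$. (For general $K$ with $\textup{Br}(K)[2]\neq 0$ we need $n$ even, and $n=6$ works.) The hypotheses of Lemma \ref{lem not surj} then hold, so there is $x \in X(K)$ not in $\pi(U(K))$.

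Next we lift $x$ to a point $u \in U(K^s)$, which is possible since $\pi$ is a torsor, hence surjective on $K^s$-points. Set $\mathfrak{g} := \phi(u)$, a $10$-dimensional 2-step nilpotent $K^s$-Lie algebra with $\dim \mathfrak{g}/\mathfrak{g}' = 6$.

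\emph{Isomorphic to its conjugates.} For $\gamma \in \Gamma$ we have $\gamma(u) \in \pi^{-1}(\gamma(x)) = \pi^{-1}(x)$, so $\gamma(u) = g\cdot u$ for some $g \in \overline{G}(K^s)$. Lifting $g$ to $\GL(V_n)(K^s)$ (possible over the separably closed field), Lemma \ref{lem mod space} over $K^s$ gives $\mathfrak{g}^\gamma = \phi(\gamma u) \cong \phi(u) = \mathfrak{g}$.

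\emph{Not defined over $K$.} Suppose $\mathfrak{g} \cong \mathfrak{g}_0 \otimes_K K^s$. Then $\mathfrak{g}_0 \in \textup{Lie}_2(6,4)(K)$, since dimensions of the derived algebra and abelianization are stable under base change. Surjectivity in Lemma \ref{lem mod space} gives $\mathfrak{g}_0 \cong \phi(F)$ with $F \in Y(K)$. Then $\phi(F)\otimes K^s \cong \phi(u)$, and by injectivity of $\overline\phi$ over $K^s$, $F = h\cdot u$ for some $h\in \GL(V_n)(K^s)$. Since $U$ is $\overline G$-stable, $F \in U(K)$ and $\pi(F) = \pi(u) = x$, contradicting the choice of $x$.

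\emph{Main obstacle.} The delicate point is the parameter choice: confirming that $(n,k)=(6,11)$ reduces legitimately to $(6,4)$ by duality, so that Proposition \ref{prop gen free} applies. Concretely, the map $F\mapsto F^\perp\subset \Lambda^2 V^*$ intertwines the action of $g$ with that of $(g^{-1})^{*}$, which is an automorphism of $\PGL$, so stabilizers correspond. We also need $U$ to be genuinely $\overline{G}$-stable, as guaranteed by the construction. Finally, $k = 4 \ge 3$ and $(6,4)$ is not among the excluded pairs; no other even $n$ gives total dimension $10$ with $k$ in range.
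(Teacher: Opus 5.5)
Your proof is correct and follows the paper's argument essentially step by step. You use the torsor $\pi : U \to X$, obtain a point $x \in X(K)\setminus \pi(U(K))$ from Lemmas \ref{lem twist} and \ref{lem not surj}, lift it to $K^s$ to get $\mathfrak{g}$, and apply Lemma \ref{lem mod space} both for isomorphism with the Galois conjugates and for non-descent. Your explicit duality $F \mapsto F^\perp$, which intertwines $g$ with $(g^{-1})^*$, spells out what the paper compresses into ``signature $(n,m)=(6,4)$, dual to the case $n=6$ and $k=4$.''
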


\begin{proof}
    Let  $n \geq 4$ be even, $3 \leq k \leq \frac{1}{2} {n \choose 2}$ and $(n,k) \neq (4,3), (6,3)$. By Proposition \ref{prop gen free}, the action of $\overline{G}$ on $Y$ is generically free, and one can construct a $\overline{G}$-torsor $\pi : U \to X$ as above. Lemmas \ref{lem twist} and \ref{lem not surj} imply that the map $\pi : U(K) \to X(K)$ is not surjective. In particular, there exists a point $x \in X(K) \setminus \pi(U(K))$. By construction, there exists $\overline{u} \in U(K^s)$ such that $\pi(\overline{u}) = x$ in $X(K^s)$. By Lemma \ref{lem mod space}, $\overline{u} \in U(K^s) \subset Y(K^s)$ defines a 2-step nilpotent $K^s$-Lie algebra $\mathfrak{g}$ of signature $(n,m)$. Since $\pi(\overline{u}) \in X(K)$, we have that for all $\gamma \in \textup{Gal}(K^s/K)$, $\pi({^\gamma \overline{u}}) = \pi(\overline{u})$ in $X(K^s)$, hence there exists $g_\gamma \in \GL(V_n)(K^s)$ such that ${^\gamma \overline{u}} = g_\gamma \cdot \overline{u}$ in $Y(K^s)$. By Lemma \ref{lem mod space}, it implies that ${^\gamma \mathfrak{g}}$ and $\mathfrak{g}$ are isomorphic as Lie $K^s$-algebras.

    Assume now that $\mathfrak{g}$ is defined over $K$, i.e. that there exists a Lie $K$-algebra $\mathfrak{g}_0$ and a $K^s$-isomorphism $\varphi : \mathfrak{g}_0 \otimes_K K^s \xrightarrow{\sim} \mathfrak{g}$. Then $\mathfrak{g}_0$ corresponds to a point $y \in Y(K)/\GL(V_n)(K)$ such that $\pi(y)=\pi(\overline{u})=x$. Therefore $x \in \pi(U(K))$, which is a contradiction. Hence $\mathfrak{g}$ is not defined over $K$, which concludes the proof.

    Since the dimension of $\mathfrak{g}$ is $n+m$ and $n \geq 4$ is even, one checks easily that the smallest possible dimension for $\mathfrak{g}$ is $10$, corresponding to a Lie algebra of signature $(n,m)=(6,4)$, dual to the case $n=6$ and $k=4$.
\end{proof}

Finally, the classical correspondence between Lie algebras and unipotent algebraic groups (see \cite{M17}, Theorem 14.37) gives the following:

\begin{corollary}
Let $K$ be a characteristic zero field such that $\textup{Br}(K)[2] \neq 0$, there exists a $10$-dimensional unipotent algebraic group $G$ over $K^s$ that is isomorphic to all of its Galois-conjugates and that is not defined over $K$. This group is a central extension of ${\mathbf{G}_a}^6$ by ${\mathbf{G}_a}^4$.
\end{corollary}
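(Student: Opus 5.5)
Take the Lie algebra $\mathfrak{g}$ over $K^s$ given by Theorem \ref{thm main}: it is $10$-dimensional, 2-step nilpotent of signature $(n,m)=(6,4)$, isomorphic to all its Galois conjugates, and not defined over $K$. Transport it to a unipotent group via the equivalence of categories in \cite{M17}, Theorem 14.37. Over a characteristic zero field $F$, this equivalence identifies the category of unipotent algebraic $F$-groups with the category of finite-dimensional nilpotent $F$-Lie algebras, through $U \mapsto \textup{Lie}(U)$. Its inverse is given concretely by the Baker--Campbell--Hausdorff group law on the underlying affine space of the Lie algebra. For a 2-step nilpotent algebra this law is simply $x \cdot y = x + y + \tfrac12[x,y]$. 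So we set $G$ to be the unipotent $K^s$-group attached to $\mathfrak{g}$; it has dimension $10$.

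\textbf{Step 1: compatibility with Galois conjugation.} Since the construction is functorial and commutes with base change of fields, $\textup{Lie}({}^\gamma G) \cong {}^\gamma \mathfrak{g}$ for every $\gamma \in \Gal(K^s/K)$. This is clear from the explicit BCH formula: conjugating the structure constants of the bracket conjugates the coefficients of the group law. The isomorphism $\mathfrak{g} \cong {}^\gamma\mathfrak{g}$ then gives ${}^\gamma G \cong G$, by full faithfulness over $K^s$.

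\textbf{Step 2: non-descent.} Suppose $G \cong G_0 \otimes_K K^s$ for some $K$-group $G_0$. Then $G_0$ is unipotent, because unipotence can be checked after field extension. Hence $\mathfrak{g}_0 := \textup{Lie}(G_0)$ is a nilpotent $K$-Lie algebra with $\mathfrak{g}_0 \otimes_K K^s \cong \textup{Lie}(G) \cong \mathfrak{g}$, since the Lie algebra functor commutes with base change. This contradicts Theorem \ref{thm main}.

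\textbf{Step 3: the extension structure.} The derived subalgebra $\mathfrak{g}'$ is central of dimension $4$, and $\mathfrak{g}/\mathfrak{g}'$ is abelian of dimension $6$. Under the equivalence, the central ideal $\mathfrak{g}'$ corresponds to a central subgroup isomorphic to $\mathbf{G}_a^4$; concretely, it is the subspace $\mathfrak{g}'$ with law $x + y$, because brackets vanish on it. The abelian quotient corresponds to $\mathbf{G}_a^6$. This gives a central extension $1 \to \mathbf{G}_a^4 \to G \to \mathbf{G}_a^6 \to 1$.

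The main point requiring care is Step 2, namely that the Lie algebra functor commutes with extension of scalars, so that a $K$-form of $G$ yields a $K$-form of $\mathfrak{g}$. The explicit BCH description makes this transparent: a $K$-form of the group law on affine space gives, by taking the quadratic part at the identity, a $K$-form of the bracket.
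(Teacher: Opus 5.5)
Your proposal is correct and follows the paper's route: the paper deduces the corollary in one line from the equivalence between unipotent groups and nilpotent Lie algebras in characteristic zero (Milne, Theorem 14.37), applied to the Lie algebra of Theorem \ref{thm main}. You make explicit what the paper leaves implicit, namely the compatibility of $\textup{Lie}$ with Galois conjugation and base change, and the central extension $1 \to \mathbf{G}_a^4 \to G \to \mathbf{G}_a^6 \to 1$ read off from the group law $x\cdot y = x+y+\tfrac12[x,y]$; incidentally, Step 2 does not need $G_0$ to be unipotent, since $\textup{Lie}(G_0)$ is a $K$-form of $\mathfrak{g}$ for any algebraic $K$-group $G_0$.
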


\section{Obstruction for the field of moduli to be a field of definition}

Let us now interpret the previous result relatively to the question of fields of moduli/definition for those Lie algebras, or to the corresponding question in terms of residual gerbe in a given stack, as in \cite{murphy24}, Appendix A, or in \cite{BV24}, section 3.
By Lemma \ref{lem mod space}, the natural stack of 2-step nilpotent Lie algebras of signature $(n,m)$ is either the quotient stack $\mathcal{X} := [Y/G]$, or the quotient stack $\overline{\mathcal{X}} := [Y/\overline{G}]$, where $Y = \textup{Gr}(k,\Lambda^2 V_n)$, $G = \GL(V_n)$ and $\overline{G} = \PGL(V_n)$. The natural surjective morphism $\pi : G \to \overline{G}$ induces a morphism of stacks $\pi_* : [Y/G] \to [Y/\overline{G}]$ that is a bijection $[Y/G](\overline{K}) \to [Y/\overline{G}](\overline{K})$ on the sets of isomorphism classes over $\overline{K}$. Using Hilbert 90, we have an exact sequence of pointed sets, where for a stack $\mathcal{Y}$ over $K$, $\mathcal{Y}(K)$ denotes the set of isomorphism classes of $K$-points in $\mathcal{Y}$:
\[1 \to Y(K)/G(K) = [Y/G](K) \to [Y/\overline{G}](K) \to H^1(K, \PGL(V_n)) \, .\]

More precisely, the morphism $\pi_* : [Y/G] \to [Y/\overline{G}]$ is a $\Gm$-gerbe (see \cite{stacks}, Tag 06PE and \cite{melo09}, beginning of section 4). For any $K$-point $x \in [Y/\overline{G}](K)$ corresponding to a $\overline{G}$-torsor over $\Spec(K)$ and a $\overline{G}$-equivariant map $\alpha : P \to Y$, the stack fiber $\pi_*^{-1}(x)$ is (canonically isomorphic to) the $\Gm$-gerbe (over $\Spec(K)$) of liftings of $P$ to a $G$-torsor through the map $G \to \overline{G}$ (see \cite{gir}, IV.2.5.8). For any lift $x' \in [Y/G](\overline{K})$ of $x$, the gerbe $\pi_*^{-1}(x)$ is a substack of $[Y/G]$ containing $\overline{x}$, and it is canonically isomorphic to the residual gerbe of $\overline{x}$ in $[Y/G]$.

Therefore, if $A$ is a non-split quaternion algebra over $K$, the construction of Theorem \ref{thm main} above provides a point $x \in X(K) = (U/\overline{G})(K)$ that can be seen in $[Y/\overline{G}](K)$ via the natural morphism $U/\overline{G} \to [Y/\overline{G}]$. This point does not lift to $[Y/G](K)$ (since $x$ does not lift to $Y(K)$), and for any lift $x' \in [Y/G](\overline{K})$, the class in $H^2(K, \Gm)$ of the residual gerbe of $x'$ in $[Y/G]$ is exactly the class of $A$ in $\Br(K)$. Of course, in the stack $[Y/\overline{G}]$, the residual gerbe of $x$ is trivial.

In particular, if $\mathfrak{g}$ is a $\overline{K}$-Lie algebra corresponding to the point $x' \in [Y/G](\overline{K})$ (i.e. a Lie algebra up to isomorphism, as in the statement of Theorem \ref{thm main}), then the field of moduli of $\mathfrak{g}$ is $K$, while the field of definition of $\mathfrak{g}$ is the quadratic extension $L$ of $K$ that is the minimal splitting field of the algebra $A$. And the non-trivial class of $A$ in $\Br(K)$ is precisely the obstruction for the field of moduli of $\mathfrak{g}$ to be equal to the field of definition of $\mathfrak{g}$. 

Conversely, we have the following result:
\begin{proposition}
Assume that $3 \leq k \leq \frac{1}{2} {n \choose 2}$ and $(n,k) \neq (4,3), (5,3), (6,3), (5,4), (5,5)$. Define $m := {n \choose 2} - k$. 
For a generic $2$-step nilpotent Lie algebra $\mathfrak{g}$ over $K^s$ in $\textup{Lie}_2(n,m)$, if $\mathfrak{g}$ is isomorphic to its Galois conjugates under $\Gamma$, then there is a natural class $A(\mathfrak{g})$ in $\Br(K)$ such that $\mathfrak{g}$ is defined over $K$ if and only if $A(\mathfrak{g})=0$.
\end{proposition}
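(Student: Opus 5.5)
Under the hypotheses on $(n,k)$, Proposition \ref{prop gen free} gives a $\overline{G}$-invariant dense open $U \subset Y = \textup{Gr}(k,\Lambda^2 V_n)$ on which $\overline{G}$ acts freely, and a $\overline{G}$-torsor $\pi : U \to X = U/\overline{G}$. We interpret ``generic'' as: $\mathfrak{g}$ corresponds via Lemma \ref{lem mod space} to a point $\overline{u} \in U(K^s)$. This condition does not depend on the choice of $\overline{u}$, since $U$ is $\GL(V_n)$-stable.

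\textbf{Step 1: $\mathfrak{g}$ gives a rational point of $X$.} Suppose $\mathfrak{g}$ is isomorphic to all its Galois conjugates. By Lemma \ref{lem mod space}, for each $\gamma \in \Gamma$ there is $g_\gamma \in \GL(V_n)(K^s)$ with ${}^\gamma\overline{u} = g_\gamma \cdot \overline{u}$. Hence $\pi({}^\gamma \overline{u}) = \pi(\overline{u})$, so $x := \pi(\overline{u})$ is $\Gamma$-invariant. Since $X$ is a variety, $x \in X(K)$.

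\textbf{Step 2: definition of the class.} The fibre $P_x := \pi^{-1}(x)$ is a $\overline{G}$-torsor over $K$. Set
\[
A(\mathfrak{g}) := \delta([P_x]) \in \Br(K),
\]
where $\delta : H^1(K,\PGL(V_n)) \to H^2(K,\Gm)$ is the coboundary of $1 \to \Gm \to \GL(V_n) \to \PGL(V_n) \to 1$. Concretely, because the action is free on $U$, each $g_\gamma$ is unique modulo scalars. Therefore $\gamma \mapsto \overline{g_\gamma}$ (suitably inverted) is a well-defined cocycle in $\PGL(V_n)(K^s)$, and $A(\mathfrak{g})$ is the class of the associated central simple algebra of degree $n$. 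This class depends only on $\mathfrak{g}$: another choice $\overline{u}' = h\cdot \overline{u}$ with $h \in \GL(V_n)(K^s)$ has the same image $x$. This justifies calling it natural. Equivalently, $A(\mathfrak{g})$ is the class of the residual $\Gm$-gerbe described just before the statement.

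\textbf{Step 3: the equivalence.} Since $H^1(K,\GL(V_n))=1$ by Hilbert 90, $\delta$ has trivial kernel, in fact is injective. So $A(\mathfrak{g})=0$ if and only if $P_x$ is trivial, i.e. $P_x(K) \neq \emptyset$. By the partition \eqref{partition}, this holds if and only if $x \in \pi(U(K))$.

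If $x = \pi(u)$ with $u \in U(K)$, then $u$ defines a $K$-Lie algebra $\mathfrak{g}_0 = \phi(u)$. Moreover $u$ and $\overline{u}$ lie in the same $\GL(V_n)(K^s)$-orbit, because $\pi$ is a $\PGL$-torsor and the scalar ambiguity is harmless. By Lemma \ref{lem mod space} over $K^s$, $\mathfrak{g}_0 \otimes_K K^s \cong \mathfrak{g}$.

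Conversely, suppose $\mathfrak{g} \cong \mathfrak{g}_0\otimes_K K^s$. Lemma \ref{lem mod space} over $K$ gives $y \in Y(K)$ with $\phi(y) \cong \mathfrak{g}_0$. Then $y$ lies in the $\GL(V_n)(K^s)$-orbit of $\overline{u}$, hence $y \in U(K)$ and $\pi(y) = x$, exactly as in the proof of Theorem \ref{thm main}. This gives $A(\mathfrak{g}) = 0$.

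\textbf{Main obstacle.} The delicate point is Step 2: checking that $A(\mathfrak{g})$ is well defined and canonical. This requires the identification of the torsor $\pi^{-1}(x)$ with the cocycle of the $g_\gamma$ modulo scalars, including the sign and inverse conventions. Freeness of the $\overline{G}$-action on $U$ is what makes the $g_\gamma$ unique up to $\Gm$, and this is where the hypotheses on $(n,k)$ enter.
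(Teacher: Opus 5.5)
Your proposal is correct and follows the paper's proof. In both, $A(\mathfrak{g})$ is the image in $\Br(K)$ of the class of the $\PGL(V_n)$-torsor $\pi^{-1}(x)$; trivial kernel of the coboundary (Hilbert 90) reduces $A(\mathfrak{g})=0$ to $x \in \pi(U(K))$, and Lemma~\ref{lem mod space} turns this into definability over $K$. You additionally spell out details the paper leaves implicit: that $x$ is $K$-rational, the explicit cocycle, and both directions of the final equivalence.
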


\begin{proof}
    Consider a non-empty $G$-stable open subvariety $U \subset Y := \textup{Gr}(k, \Lambda^2 V_n)$ given by Proposition \ref{prop gen free}. If $\mathfrak{g}$ corresponds to a point $y \in U(K^s)$ (which is the genericity assumption in the statement), then the image of $y$ by $\pi : U \to X=U/G$ is a $K$-point $x$ of $X$. Let $A(\mathfrak{g})$ denote the class in $\Br(K)$ of the $\PGL(V_n)$-torsor over $\Spec(K)$ defined by the fiber $\pi^{-1}(x)$ of $\pi$ at $x$. Then $A(\mathfrak{g})=0$ if and only if $x$ lifts to a point in $U(K)$ (corresponding to a $K$-Lie algebra in $\textup{Lie}_2(n,m)$). By Lemma \ref{lem mod space}, this last condition is equivalent to the fact that $\mathfrak{g}$ is $K^s$-isomorphic to the base change of a $K$-Lie algebra in $\textup{Lie}_2(n,m)$, which concludes the proof.
\end{proof}

\end{document}